\newcommand{\R}{\mathbb{R}}
\newcommand{\Z}{\mathbb{Z}}
\newcommand{\K}{\mathcal{K}}
\newcommand{\Ko}{\mathcal{K}_{os}}
\newcommand{\conv}{\mathrm{conv}}
\DeclareMathOperator{\spann}{lin} 
\DeclareMathOperator{\vol}{vol}
\newcommand{\LE}{\mathrm{G}}
\newcommand{\wdt}{\mathrm{w}}
\newcommand{\flt}{\omega}
\newcommand{\wonull}{\setminus\{0\}} 
\DeclareMathOperator{\lin}{lin}
\title[Polynomial bounds in Koldobsky's discrete slicing
problem]{Polynomial bounds in Koldobsky's discrete slicing problem}
\author{Ansgar Freyer and Martin Henk}
\address{Technische Universit\"at Wien, Institut f\"ur Diskrete Mathematik und Geometrie, Wiedner Hauptstraße 8-10/1046, A-1040 Wien}
\email{ansgar.freyer@tuwien.ac.at} 
\address{Technische Universit\"at Berlin, Institut f\"ur Mathematik, Sekr. MA4-1, Stra{\ss}e des 17 Juni 136, D-10623 Berlin}
\email{henk@math.tu-berlin.de}
\date{}
\thanks{}
\numberwithin{equation}{section}
\begin{document}

\theoremstyle{plain}
\newtheorem{theorem}{Theorem}[section]
\newtheorem{lemma}[theorem]{Lemma}
\newtheorem{corollary}[theorem]{Corollary}
\newtheorem{conjecture}{Conjecture}
\newtheorem{proposition}[theorem]{Proposition}
\newtheorem*{question}{Question}
\newtheorem{thmx}{Theorem}
\renewcommand{\thethmx}{\Alph{thmx}}
\newtheorem{lemmax}[thmx]{Lemma}

\newtheorem{theoman}{Theorem}[section]

\renewcommand{\thetheoman}{\Roman{theoman}}

\theoremstyle{definition}
\newtheorem*{definition}{Definition}

\newtheorem{remark}[theorem]{Remark}
\newtheorem{claim}{Claim}
\newtheorem*{remark*}{Remark}
\newtheorem{example}[theorem]{Example}

\begin{abstract}
 In 2013, Koldobsky posed the problem to find a constant $d_n$,
 depending only on the dimension $n$, such that for any
 origin-symmetric convex body $K\subset\R^n$ there exists an
 $(n-1)$-dimensional linear subspace $H\subset\R^n$ with
 \[
 	|K\cap\Z^n| \leq d_n\,|K\cap H\cap \Z^n|\,\vol(K)^{\frac 1n}.
      \]
In this article we show that  $d_n$ is bounded from above by
$c\,n^2\,\flt(n)/\log(n)$, where $c$ is an absolute constant and $\omega(n)$ is
the flatness constant. Due to the recent best known upper bound on
$\flt(n)$ we get a  ${c\,n^3\log(n)^2}$ bound on $d_n$.  This improves  on former bounds
  which were exponential in the dimension.        
\end{abstract}

\maketitle

\section{Introduction}
\label{sec:intro}

By a convex body we mean a non-empty convex compact set
$K\subset\R^n$. 
The class of convex bodies in $\R^n$ is denoted by $\K^n$ and the
subclass of convex bodies that are origin-symmetric is denoted by $\Ko^n$.

The classical and central slicing problem in convex geometry due to
Bourgain \cite{Bourgain1986, Bourgain1987a} asks
for the optimal constant $b_n>0$ such that for any $K\in\Ko^n$ there
exists a hyperplane $H$  such that 
\begin{equation}
\label{eq:bourgain}
\vol(K)\leq b_n \vol_{n-1}(K\cap H) \,\vol(K)^{\frac 1n}.
\end{equation}
Here $\vol(S)$ denotes the volume, i.e., $n$-dimensional Lebesgue  
measure of $S\subset\R^n$, and the $d$-dimensional volume of a set $S$
contained in a
$d$-dimensional affine plane is denoted  by $\vol_d(S)$.

It is conjectured that $b_n$ in \eqref{eq:bourgain} is an absolute constant and the current best known bound due to a very recent announced result of Klartag \cite{Klartag2023} is of order  $O(\sqrt{\log(n)})$.
This conjecture is equivalent to a multitude of other problems in
Convex Geometry and Geometric Analysis such as the isotropic constant
conjecture. It is considered to be one of the major open problems in
Convex Geometry  and for more information we refer to
\cite{BrazitikosGiannopoulosValettasEtAl2014, Chen2021, KlartagLehec2022a, MilmanPajor1989}.

Koldobsky considered generalizations of  \eqref{eq:bourgain} to
arbitrary measures (see, e.g., \cite{Koldobsky2014,
  Koldobsky2015}). For instance, in \cite{KlartagLivshyts2020,Koldobsky2014a} it is shown 
that the best-possible constant $k_n>0$ such that for any measure
$\nu$ with non-negative even continuous density on $K$  
there exists a hyperplane $H\subset\R^n$ with
\begin{equation}
\label{eq:koldobsky}
\nu(K)\leq k_n\,\nu(K\cap H)\vol(K)^{\frac 1n}
\end{equation}
is of order $O(\sqrt{n})$.   
While the measures considered in \eqref{eq:koldobsky} are continuous,
Koldobsky also asked for a discrete variant in a similar spirit. Here
the problem is to determine the best possible  constant $d_n>0$ such
that for any $K\in \Ko^n$ with $\dim (K\cap\Z^n) = n$
there exists a central hyperplane  $H\subset\R^n$,
i.e., a hyperplane passing through the origin,   with
\begin{equation}
\label{eq:discrete_slicing}
\LE(K)\leq d_n\,\LE(K\cap H)\vol(K)^{\frac 1n},
\end{equation}
where  $\LE(K) = |K\cap\Z^n|$ is the lattice point enumerator. In
\cite{AlexanderHenkZvavitch2017}  it was shown $d_n\in
O(n\,2^n)$ and  the best known lower bound is of order $\Omega(n)$
\cite[Theorem 1.6]{AlexanderHenkZvavitch2017}.
The main reason for this exponential gap is the unfortunate
circumstance that,
even though $K$ is origin-symmetric, the maximal (with respect to
lattice points) hyperplane section does not need to pass through the
origin.   
In fact, given a direction $y\neq 0$
in $\R^n$ the maximal affine hyperplane  section of $K$ orthogonal to $y$ might
contain $2^{n-1}$ times as many lattice points as the parallel section
through the origin (see \cite{AlexanderHenkZvavitch2017}).

On the other hand it is known \cite[Theorem 1.4]{freyerhenk} that for $K\in\Ko^n$   there
always exists an affine hyperplane $A$ such that
\begin{equation}
  \LE(K)^{(n-1)/n}\leq O(n)\LE(K\cap A),
  \label{eq:freyerhenk} 
\end{equation}   
and in this paper we show that there exists a (not necessarily
parallel)  central hyperplane $H$ such that $\LE(K\cap H)$ does not
deviate too much from  $\LE(K\cap A)$.  To this end we have to
distinguish between   ``large'' and ``small'' affine sections
$K\cap A$, measured with respect to the covering radius.

The covering radius in turn is related to the well-known flatness
constant $\flt(n)$, which is one of the main ingredients of our main
result. For precise definitions we refer to Section 2.  In order to
get a polynomial bound on $d_n$, we need, in particular, the following
bound on $\flt(n)$, which has been obtained very recently in \cite{ReisRothvoss}:
\begin{equation}     
\flt(n)\leq O(n\,\log(n)^3).
  \label{eq:flat} 
\end{equation}


\begin{theorem}
\label{thm:centered}
Let $K\in \K^n$, $\dim K=n$,  with centroid at the origin and
let $k\in\{1,\dots,n-1\}$. There exists a $k$-dimensional central plane $L\subset \R^n$ 
such that
\[
\LE(K)^\frac{k}{n} \leq O(\flt(n))^{n-k} O\left(\,\max\left\{\left(\frac{n+1}{k+1}\right)^{k},{\flt(k)\,k\,n}
\right\}\right) \LE(K\cap L).
\]
\end{theorem}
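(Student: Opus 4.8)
The plan is to reduce the statement to the known affine-section bound \eqref{eq:freyerhenk} applied to the slab/projection structure of $K$, and then to convert an \emph{affine} $k$-dimensional section into a \emph{central} one at the cost of a factor governed by the flatness constant. First I would fix the direction coming from \eqref{eq:freyerhenk}: iterating that inequality $n-k$ times (or applying its $k$-dimensional analogue), one obtains an affine $k$-plane $A$ with $\LE(K)^{k/n}\leq O(\flt(n))^{n-k}\,\LE(K\cap A)$, where each of the $n-k$ steps peeling off one dimension contributes an $O(\flt(\cdot))=O(\flt(n))$ factor; here I would cite the iterated form of \cite[Theorem 1.4]{freyerhenk} together with \eqref{eq:flat}-type control. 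The remaining task is purely to compare $\LE(K\cap A)$ with $\LE(K\cap L)$ for a suitable central $k$-plane $L$, and this is where the centroid hypothesis enters.

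Next, to pass from the affine plane $A$ to a central plane $L$, I would distinguish the two regimes hidden in the $\max$. Write $A = x_0 + L_0$ with $L_0$ linear; the key dichotomy is whether the translate $x_0$ is ``small'' or ``large'' relative to the covering radius $\mu(K\cap L_0, \Z^n\cap L_0)$ of the central slice in direction $L_0$. In the \textbf{small} regime one can, essentially by a pigeonhole/averaging argument over the integer translates of $L_0$ inside $K$ (this is where having the centroid at the origin lets one invoke a Grünbaum-type inequality to guarantee the central slice is not much thinner than a typical slice), bound $\LE(K\cap A)\leq O\!\big((\tfrac{n+1}{k+1})^{k}\big)\LE(K\cap L_0)$: the factor $(\frac{n+1}{k+1})^k$ is exactly the volume ratio between $K$ and its central section in the worst (simplex-like) case, and one transfers it to lattice points via a standard successive-minima / Davenport-type estimate. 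In the \textbf{large} regime the translate is comparable to the covering radius, so $K\cap A$ itself has covering radius $\gtrsim 1$ in its own $k$-dimensional lattice, and then one can apply the flatness theorem \emph{within} $A$ (contributing the $\flt(k)$ factor) together with a crude $O(k\,n)$ bound from the number of parallel lattice layers of $K$ one must sweep through to reach the origin; summing these layers and using again the centroid condition to control how the layer sizes decay gives $\LE(K\cap A)\leq O(\flt(k)\,k\,n)\,\LE(K\cap L)$ for the best central $L$.

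Combining the two regimes, the factor relating $\LE(K\cap A)$ to $\LE(K\cap L)$ is $O\big(\max\{(\tfrac{n+1}{k+1})^{k},\,\flt(k)\,k\,n\}\big)$, and multiplying by the $O(\flt(n))^{n-k}$ from the first step yields the claimed bound. The main obstacle I anticipate is the small regime: making the pigeonhole argument quantitative so that the central slice $K\cap L_0$ (rather than some off-center translate) captures a $(\tfrac{k+1}{n+1})^k$-fraction of the lattice points requires a genuinely discrete analogue of the convex-geometric section inequality, and the centroid-at-the-origin hypothesis must be leveraged carefully — via a lattice-point version of Grünbaum's theorem or a direct convexity/log-concavity argument on the slice-size function $t\mapsto \LE\big(K\cap (tv+L_0)\big)$ — to rule out the degenerate case where all the mass sits on one side of the origin. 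The large regime, by contrast, should be comparatively routine once the flatness theorem is invoked inside $A$.
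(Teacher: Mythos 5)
Your architecture coincides with the paper's: first produce a good affine $k$-plane $A$, then trade it for a central plane $L$, with the $\max$ coming from a dichotomy in the second step. But both steps contain genuine gaps. For the first step, iterating \cite[Theorem 1.4]{freyerhenk} does not yield the factor $O(\flt(n))^{n-k}$: that result, quoted as \eqref{eq:freyerhenk}, holds with constant $O(n)$ only for origin-symmetric bodies, and even if $K$ is centered the successive slices $K\cap A_1\supset K\cap A_2\supset\cdots$ are general convex bodies, for which the bound from \cite{freyerhenk} is $O(n^2)$ per step. Since $\flt(n)=O(n^{4/3}\log(n)^a)$, a factor $O(n^2)^{n-k}$ is strictly weaker than the claimed $O(\flt(n))^{n-k}$. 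The per-step factor $O(\flt(n))$ is exactly Theorem \ref{thm:affine} of the paper, a new result requiring its own argument: a case distinction on $\wdt(K)$, where the thin case is handled by \eqref{eq:Gw} and induction, and the thick case by projecting along $n-k$ short vectors of $(K-K)^\star$, bounding the projected lattice points via Minkowski's second theorem together with Rogers--Shephard and the reverse Blaschke--Santal\'o inequality (Lemma \ref{lemma:hafenrundfahrt}), and converting volume back to $\LE(K)$ through the covering-radius approximation \eqref{eq:approximation}. Your sketch neither supplies nor replaces this.

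For the second step, the gap you flag yourself (the ``small'' regime) is the heart of the matter, and your proposed dichotomy --- on the size of the translate $x_0$ relative to the covering radius of the central slice --- is not the one that makes the argument work. The paper's case distinction is on $\mu_{\Lambda}(K\cap A)$ with $\Lambda=\Z^n\cap A$: when this is at most $\tfrac{1}{(k+1)(n+2)}$, the sandwich \eqref{eq:approximation} replaces lattice point counts by volumes on both slices (for the central slice one uses $(K\cap A)-(K\cap A)\subseteq(n+1)(K\cap L)$, valid since $K$ is centered, to keep its covering radius below $1$), after which Fradelizi's inequality \eqref{eq:grunbaum} delivers the $\left(\tfrac{n+1}{k+1}\right)^{k}$ factor; no discrete Gr\"unbaum theorem or log-concavity of $t\mapsto\LE(K\cap(tv+L_0))$ is needed, and the latter fails in general (the paper notes the discrete constant must be $2^{n-1}$ even for symmetric bodies). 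When the covering radius is large, flatness bounds $\wdt(K\cap A)$ by $O(\flt(k)\,k\,n)$ and \eqref{eq:Gw} produces a $(k-1)$-dimensional affine lattice plane $\widetilde A\subset A$ carrying a $1/O(\flt(k)\,k\,n)$ fraction of the points; the central plane is then $L=\lin\widetilde A$. The observation that the linear span of a $(k-1)$-dimensional affine lattice plane avoiding the origin is a central $k$-plane is the device you are missing here, and the degenerate case $\dim(K\cap A\cap\Z^n)<k$ needs separate treatment. As written, your proposal reproduces the statement's structure but not its proof.
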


As a special case of our investigation and \eqref{eq:flat}, 
we obtain the desired  polynomial upper bound for $d_n$ in Koldobsky's
discrete slicing problem \eqref{eq:koldobsky}.   
\begin{corollary}
\label{cor:central}
Let $K\in \Ko^n$ with $\dim (K\cap \Z^n) = n$, $n\geq 2$. There exists 
a central
hyperplane  $H\subset \R^n$ such that
\[
\LE(K)\leq O\left(\frac{n^2\,\flt(n)}{\log(n+1)}\right) \LE(K\cap H) \vol(K)^\frac 1n.
\]
In particular,
\begin{equation}
\label{eq:main}
  \begin{split} 
\LE(K) &
\leq  O(n^3\log(n)^2) \LE(K\cap
H)\vol(K)^\frac{1}{n}.
\end{split}
\end{equation}
\end{corollary}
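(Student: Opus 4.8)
The plan is to deduce the corollary from Theorem \ref{thm:centered} applied with $k=n-1$, together with the already-established affine bound \eqref{eq:freyerhenk}. The subtlety is that Theorem \ref{thm:centered} is stated for bodies whose \emph{centroid} is at the origin, whereas the hypothesis here is only that $K$ is origin-\emph{symmetric}; but for symmetric bodies the centroid is the origin, so the theorem applies directly to $K$ itself, and the central plane $L$ it produces is a genuine central hyperplane $H$ in the sense of \eqref{eq:discrete_slicing}.

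First I would specialize Theorem \ref{thm:centered} to $k = n-1$. Then the exponent on $\LE(K)$ is $\tfrac{k}{n} = \tfrac{n-1}{n}$, the factor $O(\flt(n))^{n-k}$ collapses to $O(\flt(n))$, and the maximum becomes $\max\{((n+1)/n)^{n-1},\,\flt(n-1)\,(n-1)\,n\}$. Since $((n+1)/n)^{n-1} < e$ is bounded while $\flt(n-1)(n-1)n$ grows at least quadratically, the maximum equals $O(\flt(n-1)\,n^2) = O(n^2\,\flt(n))$ (using monotonicity, or at worst a crude comparison, of the flatness constant). Hence there is a central hyperplane $H$ with
\[
\LE(K)^{\frac{n-1}{n}} \leq O\bigl(n^2\,\flt(n)\bigr)\,\LE(K\cap H).
\]
Next I would remove the fractional exponent. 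Write $\LE(K) = \LE(K)^{\frac{n-1}{n}}\cdot\LE(K)^{\frac 1n}$ and bound the second factor. Here I would invoke a discrete analogue of the classical volume estimate: for a convex body with $\dim(K\cap\Z^n)=n$ one has $\LE(K) \leq c^n\,\vol(K)$ for an absolute constant $c$ — this is standard (e.g.\ a successive-minima / Minkowski-type argument, or the bound $\LE(K)\le 2^{n-1}(\vol(K)+\vol_{n-1}^{\max})$ combined with induction), and is the lattice-point counterpart of $\vol(K)^{1/n}$ being the natural normalization. Taking $n$-th roots gives $\LE(K)^{1/n} \leq c\,\vol(K)^{1/n}$, so that
\[
\LE(K) \;\leq\; O\bigl(n^2\,\flt(n)\bigr)\,\LE(K\cap H)\cdot c\,\vol(K)^{\frac 1n} \;=\; O\bigl(n^2\,\flt(n)\bigr)\,\LE(K\cap H)\,\vol(K)^{\frac 1n},
\]
which is the first displayed inequality of the corollary. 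Finally, substituting the flatness bound \eqref{eq:flat}, namely $\flt(n)\leq O(n^{4/3}\log(n)^a)$, yields $O(n^2\,\flt(n)) \leq O(n^{2}\cdot n^{4/3}\log(n)^a) = O(n^{10/3}\log(n)^a)$, which is \eqref{eq:main}.

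The main obstacle — and the only place requiring care — is the passage from the fractional-exponent inequality of Theorem \ref{thm:centered} to the clean form with $\vol(K)^{1/n}$: one must have a clean $\LE(K) \le O(1)^n\vol(K)$ bound available, since an extra $n^{O(1)}$ factor hidden in that step would still be fine but an exponential slip would not. I expect this to be routine given the tools already in play, but it is worth isolating as the one non-formal ingredient beyond Theorem \ref{thm:centered} itself. The reduction from general $k$ to $k=n-1$, and the simplification of the $\max\{\cdot,\cdot\}$ term, are purely arithmetic.
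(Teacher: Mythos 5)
Your reduction has a fatal gap at the step where you ``remove the fractional exponent.'' The inequality $\LE(K)\le c^n\vol(K)$ that you invoke as ``standard'' is false for exactly the bodies this corollary is about: the cross-polytope $C_n^\star=\conv\{\pm e_1,\dots,\pm e_n\}$ (discussed at the end of the paper) satisfies $\dim(C_n^\star\cap\Z^n)=n$, $\LE(C_n^\star)=2n+1$ and $\vol(C_n^\star)=2^n/n!$, so $c^n\vol(C_n^\star)\to 0$ while $\LE(C_n^\star)\to\infty$. Equivalently, $\LE(K)^{1/n}\ge 1$ whenever $K$ contains a lattice point, whereas $\vol(K)^{1/n}$ can be of order $1/n$ or smaller, so no bound of the form $\LE(K)^{1/n}\le c\,\vol(K)^{1/n}$ can hold. (Davenport-type bounds control $\LE(K)$ by volume \emph{plus} lower-dimensional projection volumes, and the upper bound in \eqref{eq:approximation} carries a factor $(1+\mu(K))^n$ with $\mu(K)$ unbounded; neither yields what you need.) There is also a secondary slip: specializing Theorem \ref{thm:centered} to $k=n-1$ gives the product $O(\flt(n))\cdot O(\max\{((n+1)/n)^{n-1},\flt(n-1)(n-1)n\})=O(n^2\flt(n)^2)$, not $O(n^2\flt(n))$ --- you state the factor $O(\flt(n))^{n-k}$ collapses to $O(\flt(n))$ but then drop it from the displayed inequality; even a repaired version of your route would lose an extra $\flt(n)$.

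The paper avoids this entirely by making $\vol(K)^{1/n}$ enter through the lattice width rather than through $\LE(K)^{1/n}$: Lemma \ref{lemma:hafenrundfahrt} with $m=1$ gives $\wdt(K)\le O(n)\vol(K)^{1/n}$, so by \eqref{eq:Gw} there is an \emph{affine} hyperplane $A$ with $\LE(K)\le O(n)\LE(K\cap A)\vol(K)^{1/n}$; then Proposition \ref{prop:deviation}~ii) (the symmetric case, with constant $O(n\,\flt(n))$, better than the centered case you use) replaces $A$ by a central hyperplane $H$. This is the mechanism that correctly handles small-volume bodies: when $\vol(K)^{1/n}$ is tiny, $K$ is flat in some lattice direction, so few parallel lattice hyperplanes cover $K\cap\Z^n$ and one of them must capture a large share of the lattice points. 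If you want to salvage a proof along your lines, you should follow that route rather than trying to compare $\LE(K)^{1/n}$ with $\vol(K)^{1/n}$.
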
 

It is quite likely that the right order is linear in the dimension 
which would also coincide with a result of Regev \cite{Regev2016}
where by a randomized  construction it is shown  $d_n\in O(n)$  provided
the volume of $K$ is  at most $c^{n^2}$, where $c$ is an absolute constant (see also \cite[Section 8]{GiannopoulosKoldobskyZvavitch}).

\section{Preliminaries}
\label{sec:pre}

Here we provide  further necessary concepts and results from Convex
Geometry and Geometry of Numbers which we need for our proof. For more
information we refer to the   books \cite{ArtsteinAvidanGiannopoulosMilman2015, Gardner2006,
  Gruber2007, GruberLekkerkerker1987,  Schneider2014}.  

Regarding the  volume of convex bodies, we will need two classical
inequalities. First, we make use of Kuperberg's improvemet \cite{kuperberg} of the so called reverse Blaschke-Santal\'o
inequality due to Bourgain and Milman \cite{BourgainMilman}:
\begin{equation}
\label{eq:kuperberg}
\frac{\pi^n}{n!}< \vol(K^\star)\vol(K),
\end{equation}
where $K\in\Ko^n$ and $K^\star = \{y\colon x\cdot y\leq 1,~\text{ for
  all }
x\in K\}$ denotes the polar body of $K$. The famous Mahler conjecture states
that the optimal bound is $4^n/n!$ (see, e.g., \cite{FradeliziMeyerZvavitch2023}).

Secondly, we utilize a well-known result by Rogers and Shephard
\cite{rogersshephard}  which allows us to compare the volume of $K\in\K^n$ to the volume of its difference body $K-K\in\Ko^n$:

\begin{equation}
\label{eq:rogersshephard}
\vol(K-K)\leq \binom{2n}{n}\vol(K)
.
\end{equation}
The bound is attained if and only if $K$ is a simplex, and we note  that
$\binom{2n}{n}<4^n$.

We  recall that a lattice $\Lambda\subset\R^n$ is  a
discrete subgroup of $\R^n$. If $\dim\Lambda=\dim K=d$ and $K\in\Ko^n$ 
is contained in the linear hull of $\Lambda$,
i.e, $K\subset\spann\Lambda$, then for
$1\leq i\leq d$, the $i$th successive minimum of $K$ with respect to
$\Lambda$ is given by 
\[
\lambda_i(K,\Lambda) = \min\{\lambda\geq 0\colon \dim(\lambda K \cap \Lambda) \geq i\}.
\]
The successive minima are related to the volume by Minkowski's second
theorem \cite[Theorem 23.1]{Gruber2007}:
\begin{equation}
\label{eq:minkowski}
\lambda_1(K,\Lambda)\cdots \lambda_d(K,\Lambda) \vol_d(K)\leq 2^d
\det\Lambda, 
\end{equation}
where $\det\Lambda$ is the determinant of the lattice, i.e., the $d$-dimensional volume of a
fundamental domain of the action of $\Lambda$ on $\spann\Lambda$. For
$K\in\K^n$ we denote by
\begin{equation*}
  \LE_\Lambda(K)=|K\cap\Lambda|
\end{equation*}
the lattice point enumerator of $K$ with respect to lattice points of $\Lambda$. 
In order to bound $\LE_\Lambda(K)$ of  a convex body
$K\subset\spann\Lambda$ in terms of the number of lattice points of
 lower dimensional sections
we need 
its lattice width with
respect to $\Lambda$ which is given by  
\begin{equation}
\label{eq:lattice_width}
\wdt_{\Lambda}(K) = \min_{y\in\Lambda^\star\wonull}\max_{x_1,x_2\in K} (x_1-x_2)\cdot y.
\end{equation}
Here, $\Lambda^\star = \{y\in\spann\Lambda \colon x\cdot y \in \Z
\text{ for all } x\in \Lambda\}$ is the polar lattice of $\Lambda$. In
particular, we have $(\Z^n)^\star=\Z^n$.
The lattice width can be also expressed in terms of the first successive minimum on the polar side via 
\begin{equation*}
   \wdt_{\Lambda}(K) = \lambda_1((K-K)^\star,\Lambda^\star).
\end{equation*}

A $k$-dimensional plane will be called a lattice plane if
it contains $k+1$ affinely independent points of $\Lambda$. The
orthogonal complement $L^\perp$ of a $k$-dimensional lattice plane $L$
containing the origin  
is an $(n-k)$-dimensional
lattice plane of $\Lambda^\star$. 
   The
$(\dim\Lambda-1)$-dimensional lattice planes are called lattice hyperlanes and can be parameterized
via the primitive vectors of $\Lambda^\star$, i.e., such a lattice
hypperlane $H$  is given by
\begin{equation*}
H(y,\beta)=\{x\in\spann\Lambda \colon x\cdot y = \beta\}
\end{equation*}
where  $y\in\Lambda^\star\setminus\{0\}$ is a generator of the
$1$-dimensional lattice  $\spann\{y\}\cap\Lambda^\star$ and
$\beta\in\Z$. The lattice $\Lambda$ can be decomposed as
\[
\Lambda = \bigcup_{\beta \in \Z} \left( \Lambda \cap H(y,\beta)\right)
,
\]
where none of the sections is empty. 
From \eqref{eq:lattice_width}, we can see that $\wdt_\Lambda(K)$ describes, up to a rounding, the minimum number of parallel lattice planes intersecting $K$. In particular, we have
\begin{equation}
\label{eq:Gw2}
\LE_\Lambda(K)\leq  (\wdt_{\Lambda}(K)+1) \LE_\Lambda(K \cap H(\overline
y^\star,\overline\beta)),
\end{equation}
where ${\overline y}^\star \in \lambda_1((K-K)^\star,\Lambda^\star)(K-K)^\star\cap\Lambda^\star\wonull$ and
$\overline \beta$ is chosen such that $\LE_\Lambda(K\cap H(\overline
y^\star,\beta))$ is maximized among $\beta\in\Z$. If $\dim(K\cap \Lambda)=\dim\Lambda$ we have $\wdt_\Lambda(K)\geq 1$ and, thus, \eqref{eq:Gw2} yields
\begin{equation}
\label{eq:Gw}
\LE_\Lambda(K)\leq  2\,\wdt_\Lambda(K) \LE_\Lambda(K \cap H(\overline
y^\star,\overline\beta))
\end{equation}
for such convex bodies.

The final lattice parameter that we take into account is the covering radius $\mu_\Lambda(K)$ of $K$. It is commonly defined as 
\[
\mu_\Lambda(K) = \min\{\mu\geq 0 \colon \mu K +\Lambda = \spann\Lambda\}.
\]
Due to a result of Khinchine \cite{khinchine}, there exists a constant
depending only on the dimension $d$ of $\Lambda$ that bounds the
product $\wdt_\Lambda(K)\mu_\Lambda(K)$ from above for all convex
bodies $K\subset\lin \Lambda$. The smallest number $\flt(d)$ with
\begin{equation}
\label{eq:flatness2}
\wdt_{\Lambda}(K)\mu_\Lambda(K)\leq\flt(d)
\end{equation}  
is the so-called flatness constant. To this day, the best known upper
bound on $\flt(d)$ stated in \eqref{eq:flat}  follows from a very recent result of
Reis and Rothvoss \cite{ReisRothvoss} which in turn builds upon
\cite{BanaszczykLitvakPajorEtAl1999}. On the other hand, it is easy to
see that $\flt(d)\geq d$ and the current best lower bound is due to a
recent result of Mayrhofer, Schade and Weltge \cite{weltgeetal},
showing that  $\flt(d)\geq 2d-o(d)$. Moreover, $\flt(d)$ is monotonous in $d$ as can be seen by extending a $d$-dimensional lattice $\Lambda$ to a $(d+1)$-dimensional lattice $\overline{\Lambda}$ via $\overline{\Lambda} = \Lambda \oplus \Z e_{d+1}$ and replacing $K\subseteq\lin \Lambda$ by $\overline{K} = K\times [0,\flt(d-1)]\cdot e_{d+1}$: 
it is $\wdt_{\overline{\Lambda}}(\overline{K}) = \wdt_\Lambda(K)$ and $\mu_{\overline{\Lambda}}(\overline{K}) = \mu_\Lambda(K)$.

Since the lattice width and covering radius are translation invariant, their definition and properties extend naturally to affine lattices $\Lambda + t$, where $\Lambda\subset\R^n$ is a lattice and $t\in\R^n$, together with convex bodies $K\subset t+\spann\Lambda$.

Another key ingredient of our proofs is the following result that has
been obtained recently in \cite[Proposition 1.6]{freyerlucas} (where the lower bound was proven independently by Dadush \cite{dadush}):
\begin{equation}
\label{eq:approximation}
(1-\mu_\Lambda(K))^d \frac{\vol_d(K)}{\det\Lambda}\leq \LE_\Lambda(K)\leq \frac{\vol_d(K)}{\det\Lambda}(1+\mu_\Lambda(K))^d.
\end{equation}
For the lower bound it is necessary to assume $\mu_\Lambda(K)\leq 1$. Although \eqref{eq:approximation} is stated in \cite{freyerlucas} only for the case $\Lambda=\Z^n$, the above generalization follows easily by applying a linear isomorphism that maps $\Z^d$ to $\Lambda$.

For the sake of brevity, if $\Lambda=\Z^n$, we write $\mu_\Lambda(K) =
\mu(K)$ and, likewise, $\lambda_i(K,\Lambda) =\lambda_i(K)$,
$\wdt_{\Lambda}(K) = \wdt(K)$ and $\LE_\Lambda(K)=\LE(K)$. Affine planes containing the origin will
be called central planes.

\section{Affine slices}
\label{sec:affine}

In \cite{freyerhenk} it was already shown that for any $K\in \K^n$ there exists an affine hyperplane $A\subset\R^n$ such that
\begin{equation}
\LE(K)^{\frac{n-1}{n}}\leq O(n^2) \LE(K\cap A).
\end{equation}
Here we refine this inequality by replacing the constant $O(n^2)$ with $O(\flt(n))$. More generally, we show the following.

\begin{theorem}
\label{thm:affine}
Let $K\in\K^n$, $n\geq 2$, and let $k\in\{1,\dots,n-1\}$.
There exists a $k$-dimensional affine plane $A\subset\R^n$  such that
\begin{equation}
\label{eq:affine_slicing}
\LE(K)^\frac{k}{n}\leq O(\flt(n))^{n-k}\,\LE(K\cap A).
\end{equation}
\end{theorem}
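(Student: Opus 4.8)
The plan is to prove this by downward induction on $k$, peeling off one dimension at a time, where the base case $k = n-1$ is the interesting step and the inductive step is essentially a repetition of it applied to a lower-dimensional lattice. So first I would establish the single-step claim: for $K \in \K^m$ with $\dim K = m \geq 2$ there is a lattice hyperplane $H$ (an $(m-1)$-dimensional lattice plane, not necessarily central) with $\LE(K)^{(m-1)/m} \leq O(\flt(m))\,\LE(K \cap H)$, and moreover the lattice induced on $H$ is a genuine $(m-1)$-dimensional lattice so the induction can continue. The natural hyperplane to pick is the one realizing the maximal section in the lattice-width direction, i.e. $H = H(\overline y^\star, \overline\beta)$ from \eqref{eq:Gw2}–\eqref{eq:Gw}.

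The heart of the argument is to combine three inequalities. From \eqref{eq:Gw} (or \eqref{eq:Gw2}) we get $\LE(K) \leq 2\,\wdt(K)\,\LE(K \cap H)$ when $\dim(K \cap \Z^n) = n$, and one must also handle the lower-dimensional / degenerate case separately (if $\dim(K\cap\Z^m) < m$ the body lies in a lattice hyperplane and the claim is trivial after a unimodular change of coordinates). To control $\wdt(K)$ we use the flatness inequality \eqref{eq:flatness2}: $\wdt(K) \leq \flt(m)/\mu(K)$. It remains to absorb the factor $1/\mu(K)$ into a power of $\LE(K)$. Here I would split into cases according to the size of $\mu(K)$. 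If $\mu(K) \geq 1$ then $\wdt(K) \leq \flt(m)$ and we are essentially done, since $\LE(K) \leq 2\flt(m)\LE(K\cap H)$ and $\LE(K)^{(m-1)/m} \leq \LE(K)$. If $\mu(K) < 1$, then the lower bound in \eqref{eq:approximation} applies: $\LE(K) \geq (1 - \mu(K))^m \vol_m(K)$, and I would want to say $\mu(K)$ cannot be too close to $1$ — but that is not automatic, so the cleaner route is: when $\mu(K) < 1$, use instead the general-position trick of translating $K$ so that $\mu(K)$-dilation covers, or directly bound $\LE(K) \geq 1$ trivially and observe that if $\mu(K)$ is bounded away from $0$ we again get $\wdt(K) = O(\flt(m))$. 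The genuinely delicate regime is $\mu(K)$ \emph{small}, i.e. $K$ is "fat"; then $\wdt(K)$ can be as large as $\flt(m)/\mu(K)$, but simultaneously $\LE(K) \approx \vol_m(K)/\det\Lambda$ is large, and one needs $\mu(K)^{-1} \lesssim \LE(K)^{1/m}$. This should follow from a volume bound: $\mu(K)^{-m} \lesssim \vol_m(K) \lesssim \LE(K)$ up to constants, using that a fundamental domain scaled by a constant multiple of $\mu(K)$ fits inside (a translate of) $K - K$ combined with Rogers–Shephard \eqref{eq:rogersshephard}. Concretely, $\det\Lambda \leq \mu(K)^m \vol_m(K - K) \leq \mu(K)^m \binom{2m}{m}\vol_m(K)$, hence $\mu(K)^{-m} \leq \binom{2m}{m}\vol_m(K)/\det\Lambda = O(4^m)\LE(K)\cdot(1+\mu(K))^{-m}$ via \eqref{eq:approximation}; in the small-$\mu$ regime $(1+\mu(K))^{-m} = \Theta(1)$, giving $\mu(K)^{-1} = O(\LE(K)^{1/m})$ up to an absolute constant, which is exactly the factor we must pull out of $\wdt(K)$.

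Putting the single step together: $\LE(K) \leq 2\wdt(K)\LE(K\cap H) \leq 2\flt(m)\mu(K)^{-1}\LE(K\cap H) \leq O(\flt(m))\LE(K)^{1/m}\LE(K\cap H)$, and dividing through by $\LE(K)^{1/m}$ yields $\LE(K)^{(m-1)/m} \leq O(\flt(m))\LE(K\cap H)$. For the iteration, apply this with $m = n$ to obtain a lattice hyperplane $A_{n-1}$; the set $A_{n-1} \cap \Z^n$ is a translate of an $(n-1)$-dimensional lattice $\Lambda_{n-1}$, and $K \cap A_{n-1}$ is a convex body in that affine lattice, so I would apply the step again (using translation invariance of $\mu$ and $\wdt$, noted in the preliminaries, and monotonicity of $\flt$) to get $\LE(K\cap A_{n-1})^{(n-2)/(n-1)} \leq O(\flt(n-1))\LE(K\cap A_{n-2})$, and so on down to dimension $k$. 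Chaining the resulting inequalities and telescoping the exponents — $\LE(K)^{k/n} = \big(\cdots\big((\LE(K)^{(n-1)/n})^{(n-2)/(n-1)}\big)\cdots\big)^{k/(k+1)}$ — gives $\LE(K)^{k/n} \leq \prod_{j=k+1}^{n} O(\flt(j)) \cdot \LE(K\cap A) \leq O(\flt(n))^{n-k}\LE(K\cap A)$, using $\flt(j) \leq \flt(n)$ for $j \leq n$. The main obstacle is the small-$\mu$ volume estimate making $\mu(K)^{-1} = O(\LE(K)^{1/n})$ rigorous with a clean absolute constant in all the boundary cases (in particular when $\mu(K)$ is neither small nor large, and when $K$ fails to be full-dimensional with respect to the lattice); everything else is bookkeeping of constants through the telescoping product.
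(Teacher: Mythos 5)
Your strategy is sound and yields a correct proof, but it is genuinely different from the paper's in the regime where the work is. The paper also splits into two cases, yet only in the ``thin'' case ($\wdt(K)\le\flt(n)+n$) does it peel off one hyperplane at a time and induct; in the ``fat'' case ($\mu(K)<1$) it drops from dimension $n$ to dimension $k$ in a single step, by projecting $K$ onto the span $L$ of lattice vectors realizing the first $n-k$ successive minima of $(K-K)^\star$, bounding $\LE_{\widetilde{\Lambda}}(K|L)$ by $\prod_{i\le n-k}\lambda_i((K-K)^\star)$ via \cite[Proposition 4.1]{henkschymuraxue}, estimating that product through Minkowski's second theorem, Kuperberg's inequality and Rogers--Shephard (Lemma~\ref{lemma:hafenrundfahrt}), converting $\vol(K)$ back to $\LE(K)$ with \eqref{eq:approximation}, and finally slicing $K$ into fibers $x+L^\perp$. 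Your route replaces all of this by iterating the one-codimension step, whose engine is the chain $\LE(K)\le 2\wdt(K)\LE(K\cap H)$, $\wdt(K)\le \flt(m)/\mu(K)$, and $\mu(K)^{-1}=O(\LE(K)^{1/m})$; the telescoping then works because each constant enters with exponent $k/(j-1)\le 1$ and $\flt$ is monotone. This is arguably simpler and avoids Lemma~\ref{lemma:hafenrundfahrt} entirely for Theorem~\ref{thm:affine} (the paper still needs it for Corollary~\ref{cor:central}, where the factor $\vol(K)^{1/n}$ must appear explicitly), at the price of a slightly less structured conclusion in the fat case.

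One detail in your key estimate needs repair. To get $\mu(K)^{-1}=O(\LE(K)^{1/m})$ you write $\vol_m(K)/\det\Lambda = O(4^m)\LE(K)(1+\mu(K))^{-m}$ ``via \eqref{eq:approximation}''; that invokes the wrong half of \eqref{eq:approximation} in the wrong direction --- the upper bound there yields a \emph{lower} bound on $\vol/\det\Lambda$ in terms of $\LE(K)$, whereas you need an upper bound. The correct tool is the lower bound of \eqref{eq:approximation}: for $\mu(K)\le 1/2$, say, $\vol_m(K)/\det\Lambda\le(1-\mu(K))^{-m}\LE(K)\le 2^m\LE(K)$. The detour through $K-K$ and Rogers--Shephard is also unnecessary: the covering property $\mu(K)K+\Lambda=\lin\Lambda$ gives $\det\Lambda\le\mu(K)^m\vol_m(K)$ directly, whence $\mu(K)^{-m}\le\vol_m(K)/\det\Lambda\le 2^m\LE(K)$, i.e.\ $\mu(K)^{-1}\le 2\LE(K)^{1/m}$, while for $\mu(K)>1/2$ one has $\wdt(K)\le 2\flt(m)$ outright. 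With that correction your single-step claim, and hence the whole argument, goes through.
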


Before we come to its proof, we remark that Rabinowitz
\cite{Rabinowitz1989} settled the case $k=1$ as he showed 
 that for any convex body $K\in\K^n$ there exists a line $\ell\subset\R^n$ with
\begin{equation}
\label{eq:rabinowitz}
\LE(K)^\frac{1}{n} \leq \LE(K\cap \ell).
\end{equation}

We also need the following lemma.

\begin{lemma}
\label{lemma:hafenrundfahrt}
Let $n\geq 2$, $K\in\K^n$ with $\dim K=n$  and let
$m\in\{1,\dots,n\}$. Then we have
\[
\big(\lambda_1((K-K)^\star) \cdots  \lambda_m((K-K)^\star)\big)^n \leq \left(n!\left(\frac{8}{\pi}\right)^n\vol(K)\right)^m.
\]
\end{lemma}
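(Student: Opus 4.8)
The plan is to identify the left-hand side, via Minkowski's second theorem, with the reciprocal of $\vol\big((K-K)^\star\big)$, and then to bound that volume from below by chaining together the reverse Blaschke--Santal\'o inequality \eqref{eq:kuperberg} with the Rogers--Shephard inequality \eqref{eq:rogersshephard}. Write $C=(K-K)^\star$. Since $\dim K=n$, the difference body $K-K\in\Ko^n$ is full-dimensional, so $C\in\Ko^n$ as well and $C^\star=K-K$; in particular the successive minima $\lambda_1(C)\leq\dots\leq\lambda_n(C)$ (with respect to $\Z^n$, so $\det\Z^n=1$) are finite and positive. Applying \eqref{eq:minkowski} to $C$ gives
\[
\prod_{i=1}^{n}\lambda_i(C)\leq\frac{2^n}{\vol(C)}.
\]

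The next step is to pass from this product of all $n$ minima to the truncated product of the first $m$. Because the minima are non-decreasing, $\lambda_m(C)=\max\{\lambda_1(C),\dots,\lambda_m(C)\}\geq\big(\prod_{i=1}^{m}\lambda_i(C)\big)^{1/m}$, so each factor $\lambda_j(C)$ with $j>m$ is at least this geometric mean; multiplying these $n-m$ estimates with the trivial identity $\prod_{i=1}^n\lambda_i(C)=\prod_{i=1}^m\lambda_i(C)\cdot\prod_{j=m+1}^n\lambda_j(C)$ yields $\big(\prod_{i=1}^m\lambda_i(C)\big)^{n/m}\leq\prod_{i=1}^n\lambda_i(C)$, and hence
\[
\Big(\prod_{i=1}^{m}\lambda_i(C)\Big)^{n}\leq\Big(\prod_{i=1}^{n}\lambda_i(C)\Big)^{m}\leq\left(\frac{2^n}{\vol(C)}\right)^{m}.
\]

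It remains to lower-bound $\vol(C)=\vol\big((K-K)^\star\big)$. Applying \eqref{eq:kuperberg} to $C$ and using $C^\star=K-K$ gives $\vol(C)>\pi^n/\big(n!\,\vol(K-K)\big)$, and then \eqref{eq:rogersshephard} together with $\binom{2n}{n}<4^n$ gives $\vol(K-K)<4^n\vol(K)$, so that $1/\vol(C)<n!\,4^n\,\vol(K)/\pi^n$. Substituting this into the previous display produces
\[
\Big(\prod_{i=1}^{m}\lambda_i(C)\Big)^{n}<\left(2^n\cdot\frac{n!\,4^n\,\vol(K)}{\pi^n}\right)^{m}=\left(n!\left(\frac{8}{\pi}\right)^{n}\vol(K)\right)^{m},
\]
which is the asserted inequality. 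There is no genuine obstacle here: the argument is a direct combination of three classical inequalities, and the only point deserving a moment's care is the monotonicity/geometric-mean step for the truncated product of successive minima, which is precisely what makes the exponent $n$ on the left and the exponent $m$ on the right match up.
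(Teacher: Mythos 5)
Your proof is correct and follows essentially the same route as the paper: the monotonicity step $(\lambda_1^\star\cdots\lambda_m^\star)^n\leq(\lambda_1^\star\cdots\lambda_n^\star)^m$, then Minkowski's second theorem, then Kuperberg's reverse Blaschke--Santal\'o inequality applied to $(K-K)^\star$, and finally Rogers--Shephard with $\binom{2n}{n}<4^n$. Your version is in fact slightly more careful, since you spell out the geometric-mean justification of the truncated-product inequality and carry the exponent $m$ correctly through the Minkowski step.
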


\begin{proof} Let  $\lambda_i^\star =
\lambda_i((K-K)^\star)$, $1\leq i\leq n$.  As 
 \[
(\lambda_1^\star\cdots\lambda_m^\star)^n \leq (\lambda_1^\star\cdots \lambda_n^\star)^m,\]
Minkowski's second theorem \eqref{eq:minkowski} yields 
\[
(\lambda_1^\star\cdots\lambda_m^\star)^n \leq \frac{2^n}{\vol((K-K)^\star)}.
\]
The bound now follows by first applying Kuperbergs reverse Blaschke-Santal\'o inequality \eqref{eq:kuperberg} to $\vol((K-K)^\star)$, followed by the Rogers-Shephard inequality \eqref{eq:rogersshephard}.
\end{proof}

\begin{proof}[Proof of Theorem \ref{thm:affine}]
  We prove
\eqref{eq:affine_slicing} by induction on the ambient dimension $n$
and so we may assume  $\dim(K\cap \Z^n) = n$. In view of \eqref{eq:rabinowitz}, we may also assume $k\geq 2$.
We distinguish two cases depending on the lattice width of $K$.

First we assume that $\wdt(K)\leq \flt(n) + n$. Let $H=H(\overline
y^\star,\overline \beta)$ as in \eqref{eq:Gw}. Then we have  
\begin{equation}
\label{eq:1}
\LE(K) \leq O(\wdt(K))\LE(K\cap H) = O(\flt(n))\LE(K\cap H).
\end{equation}
If $k=n-1$ we are done, so we can assume $k<n-1$. By induction, there exists a $k$-dimensional affine plane $A\subset H$ with
\begin{equation}
\label{eq:2}
\LE(K\cap H)^\frac{k}{n-1} \leq O(\flt(n-1))^{n-1-k} \LE(K\cap A).
\end{equation}
Substituting \eqref{eq:2} into \eqref{eq:1} gives
\begin{equation}
  \begin{split} 
\LE(K)& \leq O(\flt(n))^{1+\frac{(n-1)(n-k-1)}{k}} \LE(K\cap
A)^\frac{n-1}{k} \\
 &\leq O(\flt(n))^{\frac{n (n-k)}{k}} \LE(K\cap
A)^\frac{n}{k},
\end{split}
\end{equation}
where we used the monotonicity of the flatness constant. Taking powers, we obtain
\begin{equation}
\begin{split}
\LE(K)^\frac{k}{n} 
&\leq O(\flt(n))^{n-k}\,\LE(K\cap A)
\end{split}
\end{equation}
as desired.

Next, we assume that $\wdt(K)\geq \flt(n) + n$. In this case,
\eqref{eq:flatness2} implies 
\begin{equation}
\label{eq:3}
\mu(K) \leq \frac{\flt(n)}{\flt(n)+n} < 1.
\end{equation}
Let $y_1,\dots,y_{n-k}\in\Z^n\setminus\{0\}$ be linearly independent
with \[y_i\in \lambda_i((K-K)^\star)\,(K-K)^\star\cap\Z^n,\, 1\leq
  i\leq n-k,\]
and let $L=\lin\{y_1,\dots,y_{n-k}\}$. Moreover, let  $\widetilde{K}=K|L$ 
the  orthogonal projection of $K$ onto $L$ and we also consider the
lattice  $\widetilde{\Lambda}=\Z^n|L$. Taking polars in the subspace $L$, we find
\begin{equation*}
(\widetilde{K}-\widetilde{K})^\star = (K-K)^\star \cap L \quad\text{and}\quad  \widetilde{\Lambda}^\star = \Z^n\cap L.
\end{equation*} 
 Hence, by the choice of $L$ we have
\[\lambda_i((\widetilde{K}-\widetilde{K})^\star,
\widetilde{\Lambda}^\star) = \lambda_i((K-K)^\star)\geq \lambda_1((K-K)^\star) = \wdt(K)\geq 1,\quad 1\leq i\leq
n-k.\]
As pointed out in  \cite[Proposition 4.1]{henkschymuraxue} it follows that 
\begin{equation}
\label{eq:4}
\LE_{\widetilde{\Lambda}} (\widetilde{K}) \leq \prod_{i=1}^{n-k}
\left(\lambda_i((\widetilde{K}-\widetilde{K})^\star,
  \widetilde{\Lambda}^\star)+1\right)\leq 2^{n-k}\prod_{i=1}^{n-k}
\lambda_i((\widetilde{K}-\widetilde{K})^\star,
\widetilde{\Lambda}^\star).
\end{equation}
Applying Lemma \ref{lemma:hafenrundfahrt} yields
\begin{equation}
\label{eq:5}
\LE_{\widetilde{\Lambda}} (\widetilde{K}) \leq  O(n)^{n-k} \vol(K)^\frac{n-k}{n}.
\end{equation}
In view of \eqref{eq:3} we can apply the volume approximation by the covering radius \eqref{eq:approximation} and obtain
\begin{equation}
\label{eq:6}
\vol(K) \leq (1-\mu(K))^{-n}\LE(K)\leq \left(\frac{\flt(n)+n}{n}\right)^n\LE(K).
\end{equation}
Combining this with \eqref{eq:5} gives
\begin{equation}
\label{eq:7}
\LE_{\widetilde{\Lambda}}(\widetilde K)\leq O(\flt(n))^{n-k} \LE(K)^\frac{n-k}{n}.
\end{equation}
Hence we obtain 
\[
\begin{split}  
\LE(K) &= \sum_{x\in \widetilde{K}\cap\widetilde{\Lambda}} \LE(K\cap
(x+L^\perp)) \\
& \leq \LE_{\widetilde{\Lambda}}(\widetilde K)
\max_{x\in\widetilde{\Lambda}} \LE(K\cap (x+L^\perp))\\
&\leq O(\flt(n))^{n-k} \LE(K)^\frac{n-k}{n}\max_{x\in\widetilde{\Lambda}} \LE(K\cap (x+L^\perp)). 
\end{split}
\]
Thus 
\[
\LE(K)^\frac{k}{n}\leq O(\flt(n))^{n-k}\, \LE(K\cap A),
\]
where $A=\widetilde{x}+L^\perp$ for some $\widetilde{x}\in\widetilde{\Lambda}$ is a $k$-dimensional lattice plane such that
$\LE(K\cap A) = \max_{x\in\widetilde{\Lambda}} \LE(K\cap (x+L^\perp))$.
\end{proof}

\section{From affine to central slices}
\label{sec:central}

For an origin-symmetric convex body $K\in\Ko^n$ the classical
concavity principle of Brunn states that for any $k$-dimensional plane $A$ 
\begin{equation}
\label{eq:brunn_vol}
\vol_k(K\cap A) \leq \vol_k(K\cap (A-A)),
\end{equation}
where $A-A$ is the central plane parallel to $A$ passing through
the origin (see, e.g., \cite[Theorem
1.2.1]{ArtsteinAvidanGiannopoulosMilman2015}). If the centroid of an $n$-dimensional
convex body $K\in\K^n$ is at the origin, i.e., we have
$0=\vol(K)^{-1}\int_K x\,\mathrm dx$, we will call the body $K$
centered. 
For those bodies 
the following analogue of
\eqref{eq:brunn_vol} has been obtained by Gr\"unbaum \cite{gru60} for $k=n-1$
and by Fradelizi \cite{fradelizi} for general $k$:
\begin{equation}
\label{eq:grunbaum}
\vol_k(K\cap A)\leq \left(\frac{n+1}{k+1}\right)^{k} \vol_k(K\cap (A-A)).
\end{equation}
For $k=n-1$, the constant in the above inequality is bounded from above
by $\mathrm{e}$. In the discrete setting, however, this factor must be
replaced by $2^{n-1}$ (even in the symmetric case) as the example
$K=\conv(\pm([0,1]^{n-1}\times \{1\})$ with $A = \{x\in\R^n\colon
x_n=1\}$ shows (see, e.g., \cite{AlexanderHenkZvavitch2017, freyerhenk}).
Nonetheless, we will show that a central plane containing ``many''
lattice points  
does still exist. 

\begin{proposition}
\label{prop:deviation}
Let $n\geq 2$,  $k\in\{1,\dots,n-1\}$, and let
$A\subset\R^n$ be a  $k$-dimensional plane.
\begin{enumerate}
\item Let $K\in \K^n$, $\dim K=n$, be centered.  Then there exists a
  $k$-dimensional central  plane  $L\subset\R^n$ such that
\[
\LE(K\cap A) \leq O\left(\max\left\{\left(\frac{n+1}{k+1}\right)^{k},{k\,n\,\flt(k)}
\right\}\right) \,\LE (K\cap L).
\]
\item Let $K\in \Ko^n$, $\dim K=n$. Then there exists a $k$-dimensional
  central plane $L\subset\R^n$ such that
\[
\LE(K\cap A) \leq O\left(\frac{\flt(k)k}{\log(k+1)}\right)\,\LE (K\cap L).
\]
\end{enumerate}
\end{proposition}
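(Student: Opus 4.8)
The plan is to prove both parts of Proposition~\ref{prop:deviation} by comparing a convenient affine section $K\cap A$ to the central section $K\cap(A-A)$ and then, if the latter happens to be a poor lattice plane, replacing it with a nearby lattice plane of the same dimension. First I would reduce to the case that $A$ itself is a lattice plane: indeed, among all parallel translates $A+t$ of $A$, we may pick the one maximizing $\LE(K\cap(A+t))$, which is at least $\LE(K\cap A)$, and perturb it so that it becomes a $k$-dimensional lattice plane without decreasing its lattice-point count; hence it suffices to bound $\LE(K\cap A)$ in terms of a central section when $A$ is a lattice plane containing $k+1$ affinely independent lattice points. Let $\Lambda_A = \Z^n\cap(A-A)$ be the induced $k$-dimensional lattice in the central plane $L_0 := A-A$; note $\det\Lambda_A = \det(\Z^n\cap(A-A+t))$ for the affine lattice in $A$, so the two affine sections live over lattices of the same determinant.

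\smallskip

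The core estimate uses the volume--covering-radius sandwich \eqref{eq:approximation} applied within the $k$-dimensional spaces. On the one hand, $\LE(K\cap A)\le \frac{\vol_k(K\cap A)}{\det\Lambda_A}\bigl(1+\mu_{\Lambda_A}(K\cap A)\bigr)^k$; on the other hand, if $\mu_{\Lambda_A}(K\cap L_0)\le 1$ we get the matching lower bound $\LE(K\cap L_0)\ge \frac{\vol_k(K\cap L_0)}{\det\Lambda_A}\bigl(1-\mu_{\Lambda_A}(K\cap L_0)\bigr)^k$. Combining these with Gr\"unbaum's inequality \eqref{eq:grunbaum} (for the centered case, part (i)) or Brunn's inequality \eqref{eq:brunn_vol} (for the origin-symmetric case, part (ii)) bounds $\vol_k(K\cap A)$ by a constant times $\vol_k(K\cap L_0)$, so that
\[
\LE(K\cap A)\le C(n,k)\,\frac{(1+\mu_{\Lambda_A}(K\cap A))^k}{(1-\mu_{\Lambda_A}(K\cap L_0))^k}\,\LE(K\cap L_0),
\]
with $C(n,k)=\left(\frac{n+1}{k+1}\right)^k$ in case (i) and $C(n,k)=1$ in case (ii). This is the good regime: if the covering radius of $K\cap L_0$ is bounded away from $1$ (say $\le 1/2$), then $L:=L_0$ works and the factor is $O\bigl((\tfrac{n+1}{k+1})^k\bigr)$, respectively $O(1)$.

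\smallskip

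The main obstacle is precisely the bad regime, where $\mu_{\Lambda_A}(K\cap L_0)$ is close to (or larger than) $1$, so \eqref{eq:approximation} degenerates: the central plane $L_0$ parallel to $A$ simply may contain too few lattice points (as the example $K=\conv(\pm([0,1]^{n-1}\times\{1\}))$ shows). To handle this I would invoke the flatness phenomenon inside $L_0$. If $\mu_{\Lambda_A}(K\cap L_0)$ is large, then by the definition of the flatness constant \eqref{eq:flatness2} the lattice width $\wdt_{\Lambda_A}(K\cap L_0)$ is small — of order $\flt(k)$ — so there is a primitive $y\in\Lambda_A^\star$ such that $K\cap L_0$ is sandwiched between $O(\flt(k))$ consecutive lattice hyperplanes of $L_0$ orthogonal to $y$; passing to a maximal such $(k-1)$-dimensional lattice slice and iterating, one trades dimensions for a controlled multiplicative loss, eventually reaching a sublattice-plane direction in which the covering radius is small. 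Alternatively, and more cleanly, one argues directly: using $\LE(K\cap A)\le (\wdt(K\cap A)+1)\,\LE(K\cap A\cap H)$ from \eqref{eq:Gw2} applied to the affine section, together with $\wdt(K\cap A)=\wdt_{\Lambda_A}(K\cap A)=\wdt_{\Lambda_A}(K\cap L_0)$ (lattice width is translation-invariant and depends only on the difference body, and $(K\cap A)-(K\cap A)\subseteq (K\cap L_0)-(K\cap L_0)=2(K\cap L_0)$), and noting that in the bad regime $\mu_{\Lambda_A}(K\cap L_0)\cdot\wdt_{\Lambda_A}(K\cap L_0)\le\flt(k)$ with $\mu$ bounded below, we get $\wdt_{\Lambda_A}(K\cap A)=O(\flt(k))$; then one controls $\LE(K\cap A\cap H)$ against a central section by descending on $k$. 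Balancing the two regimes yields the stated bound, where in case (i) the $\max$ reflects the Gr\"unbaum factor $(\tfrac{n+1}{k+1})^k$ in the good regime versus the $O(k\,n\,\flt(k))$ factor accumulated in the bad regime (the extra $n$ coming from the volume-approximation error $\left(\tfrac{\flt(n)+n}{n}\right)^n$-type term or from summing over $O(\flt(k))$ slices together with a dimension-dependent loss), and in case (ii) Brunn's inequality kills the geometric factor so only the $O(\flt(k)k)$ term survives. I expect the delicate bookkeeping to be in showing that the recursion in the bad regime loses only a polynomial factor $k\,n$ rather than something exponential; the symmetry in part (ii) is what makes the Brunn bound tight enough to avoid the $(\tfrac{n+1}{k+1})^k$ blow-up entirely.
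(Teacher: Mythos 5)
Your good-regime argument (the volume--covering-radius sandwich \eqref{eq:approximation} combined with Gr\"unbaum/Fradelizi, resp.\ Brunn) is exactly the paper's, but there are two genuine gaps. First, you condition the case distinction on $\mu_{\Lambda_A}(K\cap L_0)$, the covering radius of the \emph{central} section. This does not control the factor $\bigl(1+\mu_{\Lambda_A}(K\cap A)\bigr)^k$ appearing in your own displayed inequality: the affine section can have a much larger covering radius than the parallel central one (the implication only goes the other way, via $(K\cap A)-(K\cap A)\subseteq (n+1)(K\cap L_0)$ for centered $K$, which is how the paper deduces $\mu_{\Lambda_0}(K\cap L_0)\le (n+1)\mu_\Lambda(K\cap A)$). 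The case distinction must therefore be made on $\mu_\Lambda(K\cap A)$ itself, with threshold $\tfrac{1}{(k+1)(n+2)}$ (resp.\ $\tfrac{1}{3(k+1)}$ in the symmetric case); this threshold is also where the extra factor $k\,n$ (resp.\ $k$) in the bad regime comes from, via the flatness bound $\wdt(K\cap A)\le \flt(k)\,(k+1)\,(n+2)$.

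Second, and more importantly, your bad regime ends with an unspecified ``descent on $k$'' whose bookkeeping you yourself flag as unresolved; a naive recursion would multiply a factor of order $\flt(j)\,j\,n$ at every dimension $j\le k$, which is far worse than the claimed single factor. The paper needs no recursion at all: one application of \eqref{eq:Gw} inside $A$ produces a $(k-1)$-dimensional affine \emph{lattice} plane $\widetilde A\subseteq A$ with $\LE(K\cap A)\le O(\wdt(K\cap A))\,\LE(K\cap\widetilde A)$, and the key observation is that $\spann\widetilde A$ is already a $k$-dimensional \emph{central} plane (unless $0\in\widetilde A$, in which case $A$ itself is central), so $\LE(K\cap\widetilde A)\le \LE(K\cap\spann\widetilde A)$ finishes the bad case in one step. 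One must also dispose of the degenerate case $\dim(K\cap A\cap\Z^n)<k$ by taking $L$ to be an extension of $\lin(K\cap A\cap\Z^n)$. Without the linear-span observation the proposal does not yield the stated bound.
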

In both bounds, if $k\in O(1)$, the asymptotic order of the constant
is the same as in the corresponding continuous inequalities
\eqref{eq:brunn_vol} and \eqref{eq:grunbaum}, respectively. Moreover,
for $k=n-1$ the maximum in i) is of order $O(\flt(n)n^2)$.




\begin{proof}[Proof of Proposition \ref{prop:deviation}]
Let $\Lambda = \Z^n\cap A$.  We may assume that $\Lambda$ is a 
$k$-dimensional (affine) lattice.

For i), we first assume that
\begin{equation}
\label{eq:casedist}
\mu_\Lambda(K\cap A)\leq \tfrac{1}{(k+1)\,(n+2)}.
\end{equation}
Let $L=A-A$ and let   $\Lambda_0= \Z^n\cap L$. Then $\Lambda=t+\Lambda_0$
for some $t\in\R^n$ and as $K$ is centered we  have $(K-K)\subseteq
(n+1)K$ (cf.\ \cite[Lemma 2.3.3]{Schneider2014}). Hence,
\begin{equation}
  (K\cap A)-(K\cap A)\subseteq (n+1)(K\cap L),
\label{eq:inclusioncentered}  
\end{equation}
and as the covering radius is translation invariant and homogeneous of
degree $(-1)$ we obtain 
\begin{equation}
  \mu_{\Lambda_0}(K\cap L)\leq (n+1)\mu_\Lambda(K\cap A)<1.
\label{eq:smallmu}  
\end{equation}
Next we apply again the volume approximation via the covering radius
\eqref{eq:approximation} and together with \eqref{eq:grunbaum} and on
account of \eqref{eq:smallmu} and $\det\Lambda=\det\Lambda_0$ we obtain 
\begin{equation}
\label{eq:10}
\begin{split}
\LE(K\cap A)&\leq \frac{\vol_k(K\cap A)}{\det\Lambda}(1+\mu(K\cap A,\Lambda))^{k}\\
&\leq \left(\frac{n+1}{k+1}\right)^{k}\,
\frac{\vol_k(K\cap L)}{\det\Lambda}(1+\mu(K\cap A,\Lambda))^{k}\\
& \leq \left(\frac{n+1}{k+1}\right)^{k}\,\left(\frac{1+\mu_\Lambda(K\cap
    A)}{1-\mu_{\Lambda_0}(K\cap L)}\right)^{k}\LE(K\cap L)\\
&\leq \left(\frac{n+1}{k+1}\right)^{k}\,\left(\frac{1+\mu_\Lambda(K\cap
    A)}{1-(n+1)\mu_{\Lambda}(K\cap A)}\right)^{k}\LE(K\cap L).
\end{split}
\end{equation}

With \eqref{eq:casedist} we find
\begin{equation*}
  \begin{split}
  \left(\frac{1+\mu_\Lambda(K\cap
    A)}{1-(n+1)\mu_\Lambda(K\cap A)}\right)^{k} & \leq
\left(\frac{(k+1)(n+2)+1}{(k+1)(n+2)-(n+1)}\right)^{k}\\
&= \left(\frac{(k+1)(n+2)+1}{k(n+2)+1}\right)^{k}\leq
\left(\frac{k+1}{k}\right)^{k} \leq \mathrm{e}.
\end{split}
\end{equation*}
Combining this with \eqref{eq:10} yields
\begin{equation}
\label{eq:22}
\LE(K\cap A) \leq O\left(  \left(\frac{n+1}{k+1}\right)^{k}\right)\,\LE(K\cap L).
\end{equation}
Now suppose that \eqref{eq:casedist} does not hold. Then
\eqref{eq:flatness2} 
gives 
\begin{equation}
\label{eq:23}
\wdt(K\cap A) \leq \flt(k)\,(k+1)\,(n+2).
\end{equation}
In the special case that $\dim(K\cap A\cap \Z^n) < k$, it suffices to consider the central plane $L = \lin(K\cap A\cap\Z^n)$ whose dimension is at most $k$. For this choice of $L$ we clearly have $\LE(K\cap A) \leq \LE(K\cap L)$. We are done after extending $L$ to a linear $k$-space, if necessary.

So we can assume that $K\cap A$ contains $k$ affinely independent lattice points.
Thus, it follows from \eqref{eq:Gw} that there exists an affine
$(k-1)$-dimensional lattice plane $\widetilde{A}\subset A$ such that
\begin{equation}
\label{eq:13}
\begin{split}
\LE(K\cap A) &\leq O(\wdt(K\cap A))\LE(K\cap \widetilde{A})\\
&\leq O\left(\flt(k)\,k\,n\right)\LE(K\cap\, \spann \widetilde{A}).
\end{split}
\end{equation}
If $0\in \widetilde{A} \subset A$, then $A$ is a linear space itself
and the statement of the proposition is obvious. Otherwise, $\spann
\widetilde{A}$ is a $k$-dimensional central plane. Taking the maximum
of the upper bounds in \eqref{eq:22} and \eqref{eq:13} yields the
claim in the centered case.

For ii), we replace the threshold value in \eqref{eq:casedist}; We start by assuming
\begin{equation}
\label{eq:casedist2}
\mu_\Lambda(K\cap A) \leq \frac{\log(k+1)}{3(k+1)}.
\end{equation}
Since $K$ is symmetric, we have $\tfrac{1}{2}((K\cap A) - (K\cap A)) \subseteq K\cap L$, where again $L = A-A$ is the central plane parallel to $A$. This implies $\mu_{\Lambda_0}(K\cap L) \leq 2 \mu_\Lambda(K\cap A) $. Following the lines of \eqref{eq:10} (but using Brunn's concavity principle \eqref{eq:brunn_vol} instead of Fradelizi's bound \eqref{eq:grunbaum}) we obtain
\begin{equation}
\label{eq:41}\begin{split}
\LE(K\cap A) &\leq \left( \frac{1+\mu_\Lambda(K\cap A)}{1-2\mu_\Lambda(K\cap A)}\right)^k\LE(K\cap L) = \left( 1+\frac{3\mu_\Lambda(K\cap A)}{1-2\mu_\Lambda(K\cap A)}\right)^k \LE(K\cap L)\\
& = \left( 1+\frac{1}{k}\cdot\frac{3k}{1/\mu_\Lambda(K\cap A)-2}\right)^k\LE(K\cap L)\leq \exp(3k\mu_\Lambda(K\cap A))\LE(K\cap L),
\end{split}
\end{equation}
where we used the concavity of the function $x\mapsto \log(1+x)$ in the last step. Due to our assumption \eqref{eq:casedist2} we have 
\begin{equation}
\label{eq:42}
\LE(K\cap A) \leq (k+1)\LE(K\cap L).
\end{equation}
On the other hand, if $\mu_\Lambda(K\cap A)\geq \tfrac{\log(k+1)}{3(k+1)}$, the same argument as in i) gives
\begin{equation}
\label{eq:43}
\begin{split}
\LE(K\cap A)& \leq O(\wdt_\Lambda(K\cap A)) \LE(K\cap L^\prime)\\
&\leq O\left(\frac{\flt(k)}{\mu_\Lambda(K\cap A)}\right)\LE(K\cap L^\prime)\leq O\left( \frac{\flt(k)k}{\log(k+1)}\right)\LE(K\cap L^\prime),
\end{split}
\end{equation}
for some central $k$-plane $L^\prime\subset\R^n$. Since the bound obtained in \eqref{eq:42} is smaller than the one obtained in \eqref{eq:43}, the claim follows.
\end{proof}

From here on, the proofs of Theorem \ref{thm:affine} and Corollary \ref{cor:central} can be obtained easily by first considering a large affine slice and then estimate it against a central one using Proposition \ref{prop:deviation}.

\begin{proof}[Proof of Theorem \ref{thm:centered}]
With the help of Theorem \ref{thm:affine} we obtain an affine plane $A
\subset\R^n$ of dimension $k$ such that
\[
\LE(K)^{\frac{k}{n}}\leq O(\flt(n))^{n-k}\,\LE(K\cap A).
\]
 Proposition \ref{prop:deviation} i) yields a linear $k$-dimensional
 central plane  $L\subset\R^n$ with
\[
\LE(K\cap A) \leq O\left(\max\left\{\left(\frac{n+1}{k+1}\right)^{k},{\flt(k)\,k\,n}
\right\}\right) \LE(K\cap L).
\]
The theorem is proven.
\end{proof}

\begin{proof}[Proof of Corollary \ref{cor:central}]
Lemma \ref{lemma:hafenrundfahrt} with $m=1$ tells us that 
\begin{equation}
\wdt(K)\leq O(n)\vol(K)^\frac{1}{n}
\end{equation}
Hence,  \eqref{eq:Gw} gives the existence of  an affine hyperplane $A$ such that 
\begin{equation}
\label{eq:affine_estimate}
\LE(K)\leq O(\wdt(K))\LE (K\cap A) \leq O(n)\,\LE(K\cap A)\vol(K)^\frac{1}{n}.
\end{equation}
Applying Proposition \ref{prop:deviation} ii)  to $\LE(K\cap A)$  shows
that there exists a central hyperplane $H$ with  
\[
  \begin{split}
    \LE(K\cap A) &\leq O\left(\frac{n\flt(n)}{\log(n)}\right)\LE(K\cap
    H)\\
    &\leq O\left(n^2\log(n)^2\right)\LE(K\cap
    H),
   \end{split} 
\]
where for the last inequality we used the bound \eqref{eq:flat}. 
\end{proof}

Clearly, any strengthening of the flatness theorem \eqref{eq:flat}
directly yields an improvement of \eqref{eq:main} in Corollary
\ref{cor:central}. On the other hand, the affine estimate
\eqref{eq:affine_estimate} is sharp as the 
following example from \cite{AlexanderHenkZvavitch2017} shows:
Let
$C_n^\star =
\conv\{\pm e_1,\dots,\pm e_n\}$, where $e_i$ denotes the $i$th
standard basis vector. Then,
$C_n^\star$ contains $2n+1$ lattice points, its vertices together with the origin. Its volume is $2^n/n!$. Moreover, it is easy to check that any hyperplane section of $C_n^\star$ can contain at most $2n-1$ lattice points of $C_n^\star$. This value is attained by the coordinate sections $C_n^\star\cap e_i^\perp$. Hence,
\[
\frac{\LE(C_n^\star)}{\max_A \LE(C_n^\star\cap A) \vol(C_n^\star)^\frac{1}{n}} = O(n),
\]
where $A$ ranges over all affine hyperplanes. This shows that the linear order in \eqref{eq:affine_estimate} cannot be improved.

\section*{Acknowledgements}
We thank the anonymous referee for their helpful remarks on the paper. Ansgar Freyer is partially supported by the Austrian Science Fund (FWF) Project P34446-N.


\bibliographystyle{plain}

\end{document}